\newtheorem{thm}{Theorem}[section]
\newtheorem{prop}[thm]{Proposition}
\newtheorem{defn}[thm]{Definition}
\newtheorem{problem}[thm]{Example}
\numberwithin{equation}{section}
\begin{document}


\begin{center}\large{{\bf{Notes on interval-valued Hesitant fuzzy soft Topological Space
				}}}

\vspace{0.5cm}

{\bf{
Manash Jyoti Borah$^{1}$ and Bipan Hazarika$^{2\ast}$
}}

\vspace{.2cm}
$^{1}$Department of Mathematics, Bahona College,  Jorhat-785 101, Assam, India\\ Email:mjyotibora9@gmail.com

\vspace{.2cm}

$^{2}$Department of Mathematics, Rajiv Gandhi University, Rono Hills, Doimukh-791 112, Arunachal Pradesh, India\\ Email:  bh\_rgu$@$yahoo.co.in
\end{center}
\vspace{.5cm}
\title{}
\author{}
\thanks{\today\\{$^{\ast}$The corresponding author}}

\begin{abstract}
 In this paper we introduce the notion of interval valued hesitant fuzzy soft topological space. Also the concepts of interval valued hesitant fuzzy soft closure, interior and neighbourhood are introduced here and established some important results.  \\

 Keywords: Fuzzy soft sets, Interval-valued Hesitant fuzzy sets, Interval-valued Hesitant fuzzy soft sets, Interval-valued Hesitant fuzzy soft topological space.

AMS subject classification no: 03E72.
\end{abstract}

\maketitle
\pagestyle{myheadings}
\markboth{\rightline {\scriptsize Borah, Hazarika}}
         {\leftline{\scriptsize Notes on interval-valued Hesitant fuzzy...}}

\maketitle

\section{Introduction}
The concept of interval arithmetic was first suggested by Dwyer \cite{Dwyer51} in
1951. 
 Chiao in \cite{Chiao} introduced sequence of interval numbers and
defined usual convergence of sequences of interval number.  A set consisting of a closed interval of real numbers x such that $a\leq x\leq
b$ is called an interval number. A real interval can also be considered as a
set. Thus we can investigate some properties of interval numbers, for instance
arithmetic properties or analysis properties. We denote the set of all real
valued closed intervals by I$%
\mathbf{R}
.$ Any elements of I$%
\mathbf{R}
$ is called closed interval and denoted by $\overline{x}.$ That is
$\overline{x}=\left\{  x\in%
\mathbb{R}
:\text{ }a\leq x\leq b\right\}.$ An interval number $\overline{x}$ is a
closed subset of real numbers (see \cite{Chiao}). Let $x_{l}$ and $x_{r}$ be
first and last points of $\overline{x}$ interval number, respectively. For
$\overline{x}_{1},\overline{x}_{2}\in$I$%
\mathbf{R}
,$ we have $\overline{x}_{1}=\overline{x}_{2}\Leftrightarrow x_{1_{l}}%
$=$x_{2_{l}},$ $x_{1_{r}}$=$x_{2_{r}}.$ $\overline{x}_{1}+\overline{x}%
_{2}=\left\{  x\in%
\mathbb{R}
:x_{1_{l}}+x_{2_{l}}\leq x\leq x_{1_{r}}+x_{2_{r}}\right\},$ and if
$\alpha\geq0,$ then $\alpha\overline{x}=\left\{  x\in%
\mathbb{R}
:\text{ }\alpha x_{1_{l}}\leq x\leq\alpha x_{1_{r}}\right\}  $ and if
$\alpha<0,$ then $\alpha\overline{x}=\left\{  x\in%
\mathbb{R}
:\text{ }\alpha x_{1_{r}}\leq x\leq\alpha x_{1_{l}}\right\}  ,$
\[
\overline{x}_{1}.\overline{x}_{2}=\left\{
\begin{array}
[c]{c}%
x\in%
\mathbb{R}
:\min\left\{  x_{1_{l}}.x_{2_{l}},x_{1_{l}}.x_{2_{r}},x_{1_{r}}.x_{2_{l}%
},x_{1_{r}}.x_{2_{r}}\right\}  \leq x\\
\leq\max\left\{  x_{1_{l}}.x_{2_{l}},x_{1_{l}}.x_{2_{r}},x_{1_{r}}.x_{2_{l}%
},x_{1_{r}}.x_{2_{r}}\right\}
\end{array}
\right\}  .
\]
The most appropriate theory for
dealing with uncertainties is the theory of fuzzy sets, introduced by L.A. Zadeh \cite{zadeh} in 1965.
This theory brought a paradigmatic change in mathematics. But there exists difficulty, how to
set the membership function in each particular case. 
The Hesitant fuzzy set, as one of the extensions of Zadeh \cite{zadeh} fuzzy set, allows the membership degree that an element to a set
presented by several possible values, and  it can express the hesitant information
more comprehensively than other extensions of fuzzy set. Torra and Narukawa \cite{torra2} introduced the
concept of hesitant fuzzy set. Xu and Xia \cite{xu}
defined the concept of hesitant fuzzy element, which can be considered as
the basic unit of a hesitant fuzzy set, and is a simple and effective tool used to express the
decision makers’ hesitant preferences in the process of decision making. So many  researchers  has done lots of research work on aggregation, distance,
similarity and correlation measures, clustering analysis, and decision making with
hesitant fuzzy information. Babitha and John \cite{babitha} defined another important soft set i.e. Hesitant fuzzy soft sets. They introduced basic operations such as intersection, union, compliment  and De Morgan's law was proved. Chen et al. \cite{chen} extended hesitant fuzzy sets into interval-valued hesitant fuzzy environment and introduced the concept of interval-valued hesitant fuzzy sets. Zhang et al. \cite{zhang} introduced some operations such as complement, "AND","OR", ring sum and ring product on interval-valued hesitant fuzzy soft sets.    \\

There are many theories like theory of probability, theory of fuzzy sets, theory of
intuitionistic fuzzy sets, theory of rough sets etc. which can be considered as mathematical
tools for dealing with uncertain data, obtained in various fields of engineering, physics,
computer science, economics, social science, medical science, and of many other diverse
fields. But all these theories have their own difficulties. The theory of intuitionistic fuzzy sets (see \cite{atanassov1, atanassov}) is a
more generalized concept than the theory of fuzzy sets, but this theory has the same
difficulties. All the above mentioned theories are successful to some extent in dealing with
problems arising due to vagueness present in the real world. But there are also cases where
these theories failed to give satisfactory results, possibly due to inadequacy of the
parameterization tool in them. As a necessary supplement to the existing mathematical tools
for handling uncertainty,  Molodtsov \cite{molodstov} introduced the theory of soft sets as
a new mathematical tool to deal with uncertainties while
modelling the problems in engineering, physics, computer
science, economics, social sciences, and medical sciences.
Molodtsov et al \cite{MolodtsovLeonov} successfully applied soft sets in
directions such as smoothness of functions, game theory,
operations research, Riemann integration, Perron integration, probability, and theory of measurement. Maji et al \cite{majietal1} gave the first practical application of soft sets in
decision-making problems.  Maji et al \cite{majietal2} defined
and studied several basic notions of the soft set theory. Also \c{C}a\v{g}man et al \cite{cagman} studied several basic notions of the soft set theory.  V. Torra \cite{torra,torra2} and Verma and Sharma \cite{verma} discussed the relationship between hesitant fuzzy set and showed that the envelope of hesitant fuzzy set is an intuitionistic fuzzy set. Zhang et al \cite{zhang} introduced weighted interval-valued hesitant fuzzy soft sets and finally applied it in decision making problem.\\
The notion of topological space is defined on crisp sets and hence
it is affected by different generalizations of crisp sets like
fuzzy sets and soft sets. In 1968,
C. L. Chang \cite{chang} introduced fuzzy topological space and in
2011, subsequently \c{C}a\v{g}man et al.  \cite{cagman} and Shabir et al.
\cite{shabir} introduced fuzzy soft topological spaces and
studied neighborhood.  Mahanta et al.   \cite{mahanta},
Neog et al. \cite{neog} and Ray et al.   \cite{ray} introduced fuzzy
soft topological spaces in different direction.\\
In this paper, in section 3, First we give a counter example of equality of IVHFSSs proposed by Zhang et al.	\cite{zhang}. Secondly we point out that proposition 3.11 in a previous paper by Borah and Hazarika \cite{borah} true in general by counter example. Thirdly we introduce about notion of topological space. 
\section{Preliminaries and Definitions}

In this section we recall some basic concepts and definitions
regarding fuzzy soft sets, hesitant fuzzy  set and hesitant fuzzy soft set.

\begin{defn}
\cite{maji} Let $U$ be an initial universe and $F$ be a set of parameters. Let $\tilde{P}(U)$ denote the power set of $U$ and $A$ be a
non-empty subset of $F.$  Then $F_A$ is called a
fuzzy soft set over $U,$ where $F:A\rightarrow \tilde{P}(U)$ is a
mapping from $A$ into $\tilde{P}(U).$
\end{defn}

\begin{defn}
\cite{molodstov}  $F_E$ is called a soft
set over $U$ if and only if $F$ is a mapping of $E$ into the set of all
subsets of the set $U.$
\end{defn}
In other words, the soft set is a parameterized family of subsets
of the set $U.$ Every set $F(\epsilon),$ $\epsilon\tilde{\in} E,$
from this family may be considered as the set of
$\epsilon$-element of the soft set $F_E$  or as the set of
$\epsilon$-approximate elements of the soft set.
\begin{defn}
	\cite{atanassov,xu91} Let intuitionistic fuzzy value IFV(X) denote the family of all IFVs defined on the universe X, and let $ \alpha, \beta\in IFV(X) $ be given as: \\
	$ \alpha=(\mu_\alpha,\nu_\alpha), \beta=(\mu_\beta,\nu_\beta),$
	\begin{enumerate}
\item[	(i)] $ \alpha\cap \beta=(\min(\mu_\alpha,\mu_\beta ), \max(\nu_\alpha,\nu_\beta )) $
\item[	(ii)] $ \alpha\cup \beta=(\max(\mu_\alpha,\mu_\beta ), \min(\nu_\alpha,\nu_\beta )) $	
	\item[(iii)] $ \alpha \ast \beta=(\frac{\mu_\alpha+\mu_\beta}{2(\mu_\alpha.\mu_\beta+1)},\frac{\nu_\alpha+\nu_\beta}{2(\nu_\alpha.\nu_\beta+1)} ).$	
	\end{enumerate}

\end{defn}
\begin{defn}
	\cite{torra} Given a fixed set $X, $ then a hesitant fuzzy set (shortly HFS) in $X$ is in terms of a function that when applied to $X$ return a subset of $[0, 1].$ We express the HFS by a mathematical symbol:\\
	$ F=\{<h, \mu_{F}(x)> : h\in X\}, $
	where $ \mu_{F}(x) $ is a set of some values in $[0,1],$ denoting the possible membership degrees of the element $ h\in X $ to the set $ F.$ $ \mu_{F}(x) $ is called a hesitant fuzzy element (HFE) and $ H $ is the set of all HFEs. 
\end{defn}
\begin{defn}
	\cite{torra} Let $ \mu_{1},  \mu_{2}\in H $ and three operations are defined as follows:
	\begin{enumerate}
\item[(1)] $ \mu_{1}^C=\cup_{\gamma_{1}\in\mu_{1}} \{1-\gamma_{1}\}; $
	
	\item[(2)] $ \mu_{1}\cup\mu_{2}=\cup_{\gamma_{1}\in\mu_{1}, \gamma_{2}\in\mu_{2}}\max \{\gamma_{1},\gamma_{2} \} ; $
	
	\item[(3)] $ \mu_{1}\cap\mu_{2}=\cap_{\gamma_{1}\in\mu_{1}, \gamma_{2}\in\mu_{2}}\min \{\gamma_{1},\gamma_{2} \}.$
			\end{enumerate}
\end{defn}

\begin{defn}
\cite{chen} Let $X$ be a reference set, and $D[0,1]$ be the set of all closed subintervals of $[0,1].$ An IVHFS on $X$ is 
	$ F=\{<h_i, \mu_{F}(h_i)> : h_i\in X, i=1,2,...n\}, $
	where $\mu_{F}(h_i):X\rightarrow D[0,1]$ denotes all possible interval-valued membership degrees of the element $ h_i\in X $ to the set F. For convenience, we call $ \mu_{F}(h_i) $ an interval-valued hesitant fuzzy element (IVHFE), which reads
	$\mu_{F}(h_i)=\{\gamma: \gamma\in\mu_{F}(h_i) \}.$\\
Here $ \gamma=[\gamma^L, \gamma^U] $ is an interval number. $ \gamma^L=\inf \gamma $ and $ \gamma^U=\sup \gamma $ represent the lower and upper limits of $ \gamma,$ respectively. An IVHFE is the basic unit of an IVHFS and it can be considered as a special case of the IVHFS. The relationship between IVHFE and IVHFS is similar to that between interval-valued fuzzy number and interval-valued fuzzy set. 
\end{defn}
\begin{problem}\label{exam36}
	Let $ U=\{h_1, h_2\} $ be a reference set and let 
	$ \mu_F(h_1)=\{[0.6,0.8],[0.2,0.7]\},\mu_F(h_2)=\{[0.1,0.4]\}  $ be the IVHFEs of $ h_i (i=1,2) $ to a set $F,$ respectively. Then IVHFS  $F$ can be written as  
	 $F=\{<h_1,\{[0.6, 0.8], [0.2, 0.7]\}>, <h_2, \{[0.1, 0.4]\}>\}.$
\end{problem}

\begin{defn}
	\cite{xu9} Let $ \tilde{a}=[\tilde{a}^L,\tilde{a}^U ] $ and 
$ \tilde{b}=[\tilde{b}^L,\tilde{b}^U ] $ be two interval numbers and $ \lambda\geq0, $ then 

\begin{enumerate}
	\item[(i)] $\tilde{a}=\tilde{b}\Leftrightarrow\tilde{a}^L=\tilde{b}^L$ and $ \tilde{a}^U=\tilde{b}^U ;$
	\item[(ii)]$\tilde{a}+\tilde{b}=[\tilde{a}^L+\tilde{b}^L, \tilde{a}^U+\tilde{b}^U] ;$
	\item[(iii)]$ \lambda\tilde{a}=[\lambda\tilde{a}^L,\lambda\tilde{a}^U ] ,$ especially $ \lambda\tilde{a}=0, $ if $ \lambda=0.$ 
\end{enumerate}
\end{defn}

\begin{defn}\label{defn209}
	\cite{xu9} Let $ \tilde{a}=[\tilde{a}^L,\tilde{a}^U ] $ and 
	$ \tilde{b}=[\tilde{b}^L,\tilde{b}^U ], $ and let $ l_a=\tilde{a}^U-\tilde{a}^L $ and $ l_b=\tilde{b}^U-\tilde{b}^L; $ then the degree of possibility of $\tilde{a}\geq\tilde{b}  $ is formulated by
	\[p(\tilde{a}\geq\tilde{b})=\max\left\{1-\max\left(\frac{\tilde{b}^U-\tilde{a}^L}{l_{\tilde{a}}+l_{\tilde{b}}},0\right),0\right\}.\]
	Above equation is proposed in order to compare two interval numbers, and to rank all the input arguments.
\end{defn}
\begin{defn}\label{defn210}
	\cite{chen} For an IVHFE $ \tilde{\mu} ,$ $ s(\tilde{\mu}) =\frac{1}{l_{ \tilde{\mu}}}\sum_{ \tilde{\gamma}\in \tilde{\mu}}\tilde{\gamma}$ is called the score function of $ \tilde{\mu} $ with $l_{ \tilde{\mu}}$ being the number of the interval values in  $ \tilde{\mu}, $ and $ s(\tilde{\mu}) $ is an interval value belonging to $[0,1].$ For two IVHFEs $\tilde{\mu_1}  $ and $\tilde{\mu_2} , $ if $  s(\tilde{\mu_1})\geq s(\tilde{\mu_2}),$ then $\tilde{\mu_1}\geq \tilde{\mu_2}. $\\
We can judge the magnitude of two IVHFEs using above equation. 
\end{defn}
\begin{defn}
\cite{chen} Let $\tilde{\mu}, \tilde{\mu_1}$ and $ \tilde{\mu_2}$ be three IVHFEs, then 
\begin{enumerate}
	\item[(i)]$\tilde{\mu}^C=\{[1-\tilde{\gamma}^U,1-\tilde{\gamma}^L]:\tilde{\gamma}\in\tilde{\mu} \} ; $
\item[(ii)] $\tilde{\mu_1}\cup\tilde{\mu_2}=\{[\max(\tilde{\gamma_1}^L,\tilde{\gamma_2}^L),\max(\tilde{\gamma_1}^U,\tilde{\gamma_2}^U)]:\tilde{\gamma_1}\in\tilde{\mu_1}, \tilde{\gamma_2}\in\tilde{\mu_2} \} ; $
\item[(iii] $\tilde{\mu_1}\cap\tilde{\mu_2}=\{[\min(\tilde{\gamma_1}^L,\tilde{\gamma_2}^L),\min(\tilde{\gamma_1}^U,\tilde{\gamma_2}^U)]:\tilde{\gamma_1}\in\tilde{\mu_1}, \tilde{\gamma_2}\in\tilde{\mu_2} \} ; $
\item[(iv)] $\tilde{\mu_1}\oplus \tilde{\mu_2} =\{[\tilde{\gamma_1}^L+\tilde{\gamma_2}^L-\tilde{\gamma_1}^L.\tilde{\gamma_2}^L, \tilde{\gamma_1}^U+\tilde{\gamma_2}^U-\tilde{\gamma_1}^U.\tilde{\gamma_2}^U]:\tilde{\gamma_1}\in\tilde{\mu_1}, \tilde{\gamma_2}\in\tilde{\mu_2} \} ;  $
\item[(v)] $\tilde{\mu_1}\otimes \tilde{\mu_2} =\{[\tilde{\gamma_1}^L.\tilde{\gamma_2}^L,\tilde{\gamma_1}^U.\tilde{\gamma_2}^U]:\tilde{\gamma_1}\in\tilde{\mu_1}, \tilde{\gamma_2}\in\tilde{\mu_2} \}.$  
\end{enumerate}
\end{defn}
\begin{prop}
\cite{chen} For three IVHFEs $ \tilde{\mu}, \tilde{\mu_1} $ and $ \tilde{\mu_2} ,$ we have 
\begin{enumerate}
	\item[(i)]$\tilde{\mu_1}^C\cup\tilde{\mu_2}^C= (\tilde{\mu_1}\cap\tilde{\mu_2})^C; $
	
	\item[(ii)]$ \tilde{\mu_1}^C\cap\tilde{\mu_2}^C= (\tilde{\mu_1}\cup\tilde{\mu_2})^C; $
\end{enumerate}
\end{prop}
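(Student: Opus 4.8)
The plan is to verify both identities by a direct elementwise computation, working at the level of the interval numbers $\tilde\gamma_1\in\tilde\mu_1$ and $\tilde\gamma_2\in\tilde\mu_2$ and then taking the induced set of intervals. The only algebraic facts needed are the two elementary identities $1-\max(a,b)=\min(1-a,1-b)$ and $1-\min(a,b)=\max(1-a,1-b)$ for $a,b\in[0,1]$, together with the observation that forming the complement of an interval $[\tilde\gamma^L,\tilde\gamma^U]$ reverses which endpoint is the lower one, since $1-\tilde\gamma^U\le 1-\tilde\gamma^L$.

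For part (i), I would start from the right-hand side. By definition (iii) of the preceding definition, $\tilde\mu_1\cap\tilde\mu_2=\{[\min(\tilde\gamma_1^L,\tilde\gamma_2^L),\ \min(\tilde\gamma_1^U,\tilde\gamma_2^U)]:\tilde\gamma_1\in\tilde\mu_1,\ \tilde\gamma_2\in\tilde\mu_2\}$, so applying definition (i) gives
\[
(\tilde\mu_1\cap\tilde\mu_2)^C=\bigl\{[\,1-\min(\tilde\gamma_1^U,\tilde\gamma_2^U),\ 1-\min(\tilde\gamma_1^L,\tilde\gamma_2^L)\,]\bigr\}.
\]
On the other hand, $\tilde\mu_i^C=\{[1-\tilde\gamma_i^U,1-\tilde\gamma_i^L]\}$ for $i=1,2$, and by definition (ii),
\[
\tilde\mu_1^C\cup\tilde\mu_2^C=\bigl\{[\,\max(1-\tilde\gamma_1^U,1-\tilde\gamma_2^U),\ \max(1-\tilde\gamma_1^L,1-\tilde\gamma_2^L)\,]\bigr\}.
\]
Using $\max(1-a,1-b)=1-\min(a,b)$ on each coordinate shows the two displayed sets coincide, which proves (i). Part (ii) is entirely symmetric: expand $\tilde\mu_1\cup\tilde\mu_2$ via definition (ii), take its complement, expand $\tilde\mu_1^C\cap\tilde\mu_2^C$ via definition (iii), and match coordinates using $\min(1-a,1-b)=1-\max(a,b)$.

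I do not expect any genuine obstacle here; the proof is a routine unwinding of definitions. The one point that requires a little care is the bookkeeping of endpoints: because the complement operation swaps the roles of $\tilde\gamma^L$ and $\tilde\gamma^U$, one must be consistent about writing each complemented interval as $[1-\tilde\gamma^U,1-\tilde\gamma^L]$ (lower endpoint first) before applying the $\max$/$\min$ rules of $\cup$ and $\cap$; doing this in the wrong order would scramble the identity. I would also remark that both sides are understood as equalities of the underlying sets of IVHFEs, so no ordering or duplicate-removal subtleties beyond those already implicit in the operations of the preceding definition enter the argument.
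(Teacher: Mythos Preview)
Your argument is correct: the elementwise computation using $1-\max(a,b)=\min(1-a,1-b)$ and $1-\min(a,b)=\max(1-a,1-b)$, together with the endpoint-swap in the complement, is exactly what is needed, and your caveat about keeping the lower endpoint first is the right bookkeeping point. Note, however, that the paper does not supply its own proof of this proposition at all --- it is simply quoted from \cite{chen} as a background result --- so there is nothing in the present paper to compare your approach against; your direct verification is the standard one and would be what one expects the original source to contain.
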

\begin{defn}
	\cite{wang} Let $U$ be an initial universe and $E$ be a set of parameters. Let $\tilde{F}(U)$ be the set of all hesitant fuzzy subsets of $ U. $ Then $F_E$ is called a hesitant fuzzy soft set (HFSS) over $ U, $ where $\tilde {F}:E\rightarrow \tilde{F}(U). $\\
	A HFSS is a parameterized family of hesitant fuzzy subsets of $U,$ that is, $\tilde{F}(U).$ For all $\epsilon\tilde{\in} E,$  $F(\epsilon)$ is referred to as the set of $ \epsilon-$approximate elements of the HFSS $F_E.$ It can be written as 	
	\[\tilde{F}(\epsilon)=\{<h, \mu_{\tilde{F}(\epsilon)(x)}> : h\in U\}.\]
	 Since  HFE can represent the situation, in which different membership function are considered possible (see \cite{torra}), $ \mu_{\tilde{F}(\epsilon)(x)} $ is a set of several possible values, which is the hesitant fuzzy membership degree. In particular, if $ \tilde{F}(\epsilon) $ has only one element,  $ \tilde{F}(\epsilon)$  can be called a hesitant fuzzy soft number. For convenience, a hesitant fuzzy soft number (HFSN) is denoted by $ \{<h, \mu_{\tilde{F}(\epsilon)(x)}>\}.$ 
	 
\end{defn}
\begin{problem}\label{exam36}
		Suppose $ U=\{h_1,h_2\} $ be an initial universe and $ E=\{e_1, e_2, e_3, e_4\} $ be a set of parameters. Let $ A=\{e_1, e_2\} .$ Then the hesitant fuzzy soft set $ F_A $ is given as \\ $F_A=\{F(e_1)=\{<h_1,\{0.6, 0.8\}>, <h_2, \{0.8, 0.4, 0.9\}>\},  F(e_2)=\{<h_1,\{0.9, 0.1, 0.5\}>, <h_2,\{0.2 \}>\}\}.$
	
\end{problem}

\begin{defn}
\cite{zhang} Let $ (U,E) $ be a soft universe and $ A\subseteq E .$ Then $ F_A $ is called an interval valued hesitant fuzzy soft set over $U,$ where $ F $ is a mapping given by $F:A \rightarrow IVHF(U). $\\
An interval-valued hesitant fuzzy soft set is a parameterized family of interval-valued hesitant fuzzy subset of $ U. $ That is to say, $ F(e) $ is an interval-valued hesitant fuzzy subset in $ U,\forall e\in A. $ Following the standard notations, $ F(e) $ can be written as 
  \[\tilde{F}(e)=\{<h, \mu_{\tilde{F}(e)(x)}> : h\in U\}.\]
\end{defn}

\begin{problem}\label{exam36}
		Suppose $ U=\{h_1,h_2\} $ be an initial universe and $ E=\{e_1, e_2, e_3, e_4\} $ be a set of parameters. Let $ A=\{e_1, e_2\} .$ Then the interval valued hesitant fuzzy  soft set $ F_A $ is given as
	
 $F_A=\{e_1=\{<h_1,[0.6, 0.8]>, <h_2, [0.1, 0.4]>\}\\ 
  e_2=\{<h_1,[0.2, 0.6],[0.3,0.9] >, <h_2, [0.2,0.5],[0.2,0.8], [0.2,0.8]>\}\}.$
\end{problem}

\begin{defn}
\cite{zhang} Let $ U $ be an initial universe and let $ E $ be a set of parameters. Supposing that $ A, B \tilde{\subseteq}E, F_A $and $ F_B $ are two interval-valued hesitant fuzzy soft sets, one says that $ F_A $ is an interval-valued hesitant fuzzy soft subset of $ G_B $ if and only if 
\begin{enumerate}
\item[(i)] $ A\tilde{\subseteq}B, $
\item[(ii)]$\gamma_1^{\sigma(k)} \tilde{\leq}\gamma_2^{\sigma(k)},$
\end{enumerate}
where for all $ e\tilde{\in}A,~ x\tilde{\in}U,~ \gamma_1^{\sigma(k)} $and $\gamma_2^{\sigma(k)}  $ stand for the kth largest interval number in the IVHFEs $~ \mu_{F(e)(x)} $ and $\mu_{ G(e)(x)}, $ respectively. In this case, we write $ F_A\tilde{\subseteq}G_A. $
\end{defn}
\begin{defn}
	\cite{zhang} The complement of $ F_A, $ denoted by $ F_A^C, $ is defined by \[F_A^C(e)=\{<h, \mu_{\tilde{F}^C(e)(x)}> : h\in U\},\]  where $ \mu_{F}^C:A\rightarrow IVHF(U) $ is a mapping given by $\mu_{\tilde{F}^C(e)}, \forall e\tilde{\in}A $ such that $\mu_{\tilde{F}^C(e)} $ is the complement of interval-valued hesitant fuzzy element $ \mu_{\tilde{F}(e)}  $ on $ U. $
\end{defn}
\begin{defn}
	\cite{zhang} An interval-valued hesitant fuzzy soft set is said to be an empty interval-valued hesitant fuzzy soft set, denoted by $ \tilde{\phi},$ if $ F:E\rightarrow IVHF(U)$ such that \[\tilde{F}(e)=\{<h, \mu_{\tilde{F}(e)(x)}> : h\in U\}=\{<h, \{[0,0]\}> : h\in U\}, \forall e\tilde{\in}E. \]
\end{defn}

\begin{defn}
	\cite{zhang} An interval-valued hesitant fuzzy soft set is said to be an full interval-valued hesitant fuzzy soft set, denoted by $ \tilde{E},$ if $ F:E\rightarrow IVHF(U)$ such that 
\[\tilde{F}(e)=\{<h, \mu_{\tilde{F}(e)(x)}> : h\in U\}=\{<h, \{[1,1]\}> : h\in U\}, \forall e\tilde{\in}E. \]
\end{defn}
\begin{defn}
	\cite{borah}
		The union of two interval-valued hesitant fuzzy soft sets $ F_A $ and $ G_B $ over $ (U,E) ,$ is the interval-valued hesitant fuzzy soft set $ H_C ,$ where $ C=A\cup B $ and,  $\forall e\tilde{\in}C,$

		$\mu_{H(e)}=\begin{cases}
		\mu_{F(e)}, & \textrm{if $e\tilde{\in}A-B;$}\\
		\mu_{G(e)}, & \textrm{if $ e\tilde{\in}A-B;$}\\
		\mu_{F(e)}\cup \mu_{G(e)}, & \textrm{if $e\tilde{\in}A\cap B.$}\end{cases}$
		
		We write $ F_A \tilde{\cup}G_B=H_C.$
\end{defn}
\begin{defn}
	\cite{borah}  The intersection of two interval-valued hesitant fuzzy soft sets $ F_A $ and $ G_B $ with $ A\cap B\neq\phi $ over $ (U,E) ,$ is the interval-valued hesitant fuzzy soft set $ H_C ,$ where $ C=A\cap B, $ and,  $\forall e\tilde{\in}C, \mu_{ H(e)}=\mu_{F(e)}\cap \mu_{G(e)}.$ 
	We write $ F_A \tilde{\cap}G_B=H_C.$	
\end{defn}
\section{Interval-valued hesitant fuzzy soft topological space}
In this section, First we give a counter example of equality of IVHFSSs proposed by Zhang et al.	\cite{zhang}. Secondly we point out that proposition 3.11 in a previous paper by Borah and Hazarika \cite{borah} true in general by counter example. Thirdly we introduce about notion of topological space. 
\begin{defn}
	\cite{zhang} Let $ F_A $ and $ G_B $ be two interval-valued hesitant fuzzy soft sets. Now $ F_A $ and $ G_B $ are said to be interval-valued hesitant fuzzy soft equal if and only if  
	\begin{enumerate}
		\item[(i)] $ F_A\tilde{\subseteq}G_B, $
		\item[(ii)]$ G_B\tilde{\subseteq}F_A, $
	\end{enumerate}
	which can be denoted by $ F_A=G_B $
\end{defn}
\begin{problem}
	$F_A=\{e_1=\{<h_1,[0.2, 0.5]>, <h_2, [0.5, 0.8],[0.4,0.9]>\}, \\ e_2=\{<h_1,[0.3, 0.6], [0.4,0.8]>, <h_2, [0.6,0.8]>\}\}.$\\
	
	$G_A=\{e_1=\{<h_1,[0.2, 0.5],[0.2,0.5]>, <h_2, [0.5, 0.8],[0.4,0.9]>\},\\  e_2=\{<h_1,[0.3, 0.6], [0.4,0.8]>, <h_2, [0.6,0.8],[0.6,0.8]>\}\}.$\\
	Therefore  $  F_A\tilde{\subseteq}G_A $ and $ G_A\tilde{\subseteq}F_A. $\\
	Hence $ F_A=G_A $
\end{problem}
\begin{prop}\label{prop38}
	Let 	$ F_A, G_B$ and $ H_C $ be three interval-valued hesitant fuzzy soft sets. Then the following  are satisfied:
	
	\begin{enumerate}
		\item[(i)] $F_A\tilde{\cup}(G_B\tilde{\cap}H_C)=(F_A\tilde{\cup} G_B)\tilde{\cap}(F_A\tilde{\cup}H_C)$
		\item[(ii)]$F_A\tilde{\cap}(G_B\tilde{\cup}H_C)=(F_A\tilde{\cap} G_B)\tilde{\cup}(F_A\tilde{\cap}H_C).$
		
	\end{enumerate}
\end{prop}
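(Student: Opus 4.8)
The plan is, for each of the two identities, to show that the interval-valued hesitant fuzzy soft sets on its two sides have the same parameter set and then assign the same interval-valued hesitant fuzzy element to every parameter in that set; throughout I would assume, as the definition of $\tilde{\cap}$ requires, that all the intersections occurring in the statement are taken over nonempty parameter sets.

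First I would pin down the parameter sets. For (i), $G_B\tilde{\cap}H_C$ is defined over $B\cap C$, so $F_A\tilde{\cup}(G_B\tilde{\cap}H_C)$ is defined over $A\cup(B\cap C)$, while $F_A\tilde{\cup}G_B$ and $F_A\tilde{\cup}H_C$ are defined over $A\cup B$ and $A\cup C$, so $(F_A\tilde{\cup}G_B)\tilde{\cap}(F_A\tilde{\cup}H_C)$ is defined over $(A\cup B)\cap(A\cup C)$. The ordinary distributive law for sets, $A\cup(B\cap C)=(A\cup B)\cap(A\cup C)$, makes these agree; (ii) is handled the same way with $\cup$ and $\cap$ exchanged.

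Next I would fix a parameter $e$ in the common parameter set and compute the IVHFE that each side attaches to $e$, splitting into the cases $e\in A-(B\cup C)$, $e\in(A\cap B)-C$, $e\in(A\cap C)-B$, $e\in A\cap B\cap C$, and $e\in(B\cap C)-A$. Unwinding the definitions of $\tilde{\cup}$ and $\tilde{\cap}$, the ``generic'' case $e\in A\cap B\cap C$ reduces to the single-element identity
\[\mu_{F(e)}\cup\bigl(\mu_{G(e)}\cap\mu_{H(e)}\bigr)=\bigl(\mu_{F(e)}\cup\mu_{G(e)}\bigr)\cap\bigl(\mu_{F(e)}\cup\mu_{H(e)}\bigr),\]
and the other cases to things like $\mu_{F(e)}$ versus $\mu_{F(e)}\cap\mu_{F(e)}$ or $\mu_{G(e)}\cap\mu_{H(e)}$ versus itself. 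For the displayed identity I would write out $\tilde{\mu_1}\cup\tilde{\mu_2}$ and $\tilde{\mu_1}\cap\tilde{\mu_2}$ as sets of interval numbers whose endpoints are built with $\max$ and $\min$, invoke the endpointwise distributivity $a\vee(b\wedge c)=(a\vee b)\wedge(a\vee c)$ and $a\wedge(b\vee c)=(a\wedge b)\vee(a\wedge c)$ of the reals, and verify the two inclusions $\subseteq$ and $\supseteq$ separately; part (ii) would then follow either by the symmetric computation or from (i) via the De Morgan laws $(\tilde{\mu_1}\cap\tilde{\mu_2})^C=\tilde{\mu_1}^C\cup\tilde{\mu_2}^C$ and $(\tilde{\mu_1}\cup\tilde{\mu_2})^C=\tilde{\mu_1}^C\cap\tilde{\mu_2}^C$ from the preceding proposition.

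The hard part will be exactly the single-IVHFE step together with the degenerate subcases. Since $\tilde{\mu_1}\cup\tilde{\mu_2}$ and $\tilde{\mu_1}\cap\tilde{\mu_2}$ range over \emph{all} pairs $(\tilde{\gamma_1},\tilde{\gamma_2})$, forming $(\tilde{\mu_1}\cup\tilde{\mu_2})\cap(\tilde{\mu_1}\cup\tilde{\mu_3})$ can create interval numbers not present in $\tilde{\mu_1}\cup(\tilde{\mu_2}\cap\tilde{\mu_3})$ --- and even $\tilde{\mu}\cap\tilde{\mu}$ need not equal $\tilde{\mu}$ --- so the ``$\supseteq$'' inclusion is not automatic. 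If it turns out to fail, the proposition is false as stated and the right response is a small explicit counterexample (say two or three IVHFEs on a one-point universe) rather than a proof; this is the same kind of subtlety the paper announces it will point out for the analogous statement of \cite{borah}.
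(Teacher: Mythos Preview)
Your proposal and the paper take entirely different routes. The paper's ``proof'' is not a general argument at all: it fixes three concrete interval-valued hesitant fuzzy soft sets $F_A$, $G_B$, $H_C$ over $U=\{h_1,h_2\}$ with explicit interval data, computes both sides of (i) and (ii) directly, and observes that they coincide for this single instance. No parameter-set analysis, no case split, no lemma about IVHFE distributivity --- just one worked example.

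Your approach is the one that would actually address the general statement: matching parameter sets via ordinary set distributivity, then a five-case analysis on $e$ reducing to the IVHFE-level identity $\mu_1\cup(\mu_2\cap\mu_3)=(\mu_1\cup\mu_2)\cap(\mu_1\cup\mu_3)$. You are also right to flag the delicate point: because the IVHFE operations range over \emph{all} pairs, $\tilde{\mu}\cap\tilde{\mu}$ need not equal $\tilde{\mu}$, and the ``$\supseteq$'' direction is genuinely suspect. The paper's example-based treatment sidesteps this entirely (and, as you note, is consistent with the authors' stated aim of revisiting the analogous Proposition~3.11 of \cite{borah} via example). What your approach buys is an honest confrontation with whether the proposition is true in full generality; what the paper's approach buys is merely a confirmation that it is not vacuously false.
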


\begin{proof}
	We consider IVHFSSs.\\
		$F_A=\{e_1=\{<h_1,[0.3, 0.8]>, <h_2, [0.3, 0.8],[0.5,0.6],[0.3,0.6]>\},\\  e_2=\{<h_1,[0.2, 0.9],[0.7,1.0] >, <h_2, [0.8,1.0],[0.2,0.6]>\}\}.$\\
		    $G_B=\{e_1=\{<h_1,[0.7, 0.9],[0.0,0.6]>, <h_2, [0.4, 0.7],[0.4,0.5]>\}, \\ e_2=\{<h_1,[0.6, 0.8]>, <h_2, [0.3,0.8],[0.3,0.6]>\},\\
		e_3=\{<h_1,[0.5, 0.6],[0.3,0.6] >, <h_2, [0.1,0.6],[0.3,0.9],[0.3,0.6]>\}\}.$\\
	and\\
	  $H_C=\{e_2=\{<h_1,[0.4, 0.6],[o.2,0.6],[0.7,1.0]>, <h_2,[0.3,0.8],>\},\\  e_3=\{<h_1,[0.2, 0.5],[0.3,0.5] >, <h_2, [0.6,0.8],[0.2,0.5]>\}\}.$\\

	(i)  We have\\
	$ F_A \tilde{\cup}G_B= \{e_1=\{<h_1,[0.3, 0.8],[0.7,0.9]>, <h_2, [0.4, 0.6],[0.4,0.8][0.5,0.7]>\}\\  e_2=\{<h_1,[0.6, 0.9][0.7,1.0]>, <h_2, [0.3,0.6],[0.8,1.0]>\}\\
		e_3=\{<h_1,[0.3, 0.6],[0.5,0.6] >, <h_2, [0.1,0.6],[0.3,0.9],[0.3,0.6]>\}\}.$\\	
		$F_A\tilde{\cup}H_C= \{e_1=\{<h_1,[0.3, 0.8]>, <h_2, [0.3, 0.6],[0.3,0.8][0.5,0.6]>\}\\  e_2=\{<h_1,[0.2, 0.9],[0.7,1.0],[0.7,1.0]>, <h_2, [0.3,0.8],[0.8,1.0]>\}\\
	e_3=\{<h_1,[0.2, 0.5],[0.3,0.5] >, <h_2, [0.2,0.5],[0.6,0.8]>\}\}.$\\
	
	$(F_A\tilde{\cup} G_B)\tilde{\cap}(F_A\tilde{\cup}H_C)\\
	=\{e_1=\{<h_1,[0.3, 0.8],[0.3, 0.8]>, <h_2, [0.3, 0.6],[0.3,0.8][0.5,0.6]>\}\\  e_2=\{<h_1,[0.2, 0.9],[0.7,1.0],[0.7,1.0]>, <h_2, [0.3,0.6],[0.8,1.0]>\}\\
	e_3=\{<h_1,[0.2, 0.5],[0.3,0.5] >, <h_2, [0.1,0.5],[0.3,0.8],[0.3, 0.6]>\}\}.$\\
	Again\\
	$G_B\tilde{\cap}H_C= \{e_2=\{<h_1,[0.2, 0.6],[0.4,0.6],[0.6,0.8]>, <h_2, [0.3,0.6],[0.3,0.8]>\}\\
	e_3=\{<h_1,[0.2, 0.5],[0.3,0.5] >, <h_2, [0.1,0.5],[0.3,0.8],[0.3,0.6]>\}\}.$\\
	Therefore\\ $F_A\tilde{\cup}(G_B\tilde{\cap}H_C)$\\
	$ =\{e_1=\{<h_1,[0.3, 0.8]>, <h_2, [0.3, 0.6],[0.3,0.8][0.5,0.6]>\}\\  e_2=\{<h_1,[0.2, 0.9],[0.7,1.0],[0.7,1.0]>, <h_2, [0.3,0.6],[0.8,1.0]>\}\\
	e_3=\{<h_1,[0.2, 0.5],[0.3,0.5] >, <h_2, [0.1,0.5],[0.3,0.8],[0.3, 0.6]>\}\}.$\\
	Hence $F_A\tilde{\cup}(G_B\tilde{\cap}H_C) =(F_A\tilde{\cup} G_B)\tilde{\cap}(F_A\tilde{\cup}H_C).$\\
	
	(ii) We have \\
	$ F_A \tilde{\cap}G_B= \{e_1=\{<h_1,[0.0, 0.6],[0.3,0.8]>, <h_2, [0.3, 0.5],[0.3,0.7][0.4,0.6]>\}\\  e_2=\{<h_1,[0.2, 0.8][0.6,0.8]>, <h_2, [0.2,0.6],[0.3,0.8]>\}\}.$\\
	$G_B\tilde{\cup}H_C= \{e_1=\{<h_1,[0.0, 0.6],[0.7,0.9]>, <h_2, [0.4, 0.5],[0.4,0.7][0.4,0.7]>\}\\  e_2=\{<h_1,[0.6, 0.8],[0.6,0.8],[0.7,1.0]>, <h_2, [0.3,0.8],[0.3,0.8]>\}\\
	e_3=\{<h_1,[0.3, 0.6],[0.5,0.6] >, <h_2, [0.2,0.6],[0.6,0.9],[0.6,0.8]>\}\}.$\\
	Therefore\\ $F_A\tilde{\cap}(G_B\tilde{\cup}H_C)$\\
	$ =\{e_1=\{<h_1,[0.0, 0.6],[0.3,0.8]>, <h_2, [0.3, 0.5],[0.3,0.7][0.4,0.6]>\}\\  e_2=\{<h_1,[0.2, 0.8],[0.6,0.8],[0.7,1.0]>, <h_2, [0.2,0.6],[0.3,0.8]>\}\}.$\\
	
	Again\\
	$F_A\tilde{\cap}H_C= \{e_2=\{<h_1,[0.2, 0.6],[0.4,0.6],[0.7,1.0]>, <h_2, [0.2,0.6],[0.3,0.8]>\}\}.$\\ 
	Therefore \\
	$(F_A\tilde{\cap} G_B)\tilde{\cup}(F_A\tilde{\cap}H_C)$\\
	$=\{e_1=\{<h_1,[0.0, 0.6],[0.3,0.8]>, <h_2, [0.3, 0.5],[0.3,0.7][0.4,0.6]>\}\\  e_2=\{<h_1,[0.2, 0.8],[0.6,0.8],[0.7,1.0]>, <h_2, [0.2,0.6],[0.3,0.8]>\}\}.$\\ 
	
	Hence $F_A\tilde{\cap}(G_B\tilde{\cup}H_C) =(F_A\tilde{\cap} G_B)\tilde{\cup}(F_A\tilde{\cap}H_C).$ 
\end{proof}
\begin{defn}  A interval-valued hesitant fuzzy soft topology $\tau$ on	$(U,E)$ is a family of interval-valued hesitant fuzzy soft sets over $(U,E)$ satisfying the following properties:
	\begin{enumerate}
		\item[(i)] $\tilde{\phi},\tilde{E}\tilde{\in}\tau$
		\item[(ii)] if $F_{A},G_{B}\tilde{\in}\tau,$  then
		$F_{A}\tilde{\cap}G_{B}\tilde{\in}\tau.$
		\item[(iii)] if ${F_{A_{\alpha}}\tilde{\in}}\tau$ for all
		$\alpha\tilde{\in}\Delta$ an index set, then
		$\bigcup_{\alpha\in{\Delta}}F_{A_{\alpha}}\tilde{\in}\tau.$
	\end{enumerate}
\end{defn}
\begin{problem}\label{prob35} Let $ U=\{h_1,h_2\} $ and $ E=\{e_1, e_2, e_3\},$ and consider 
	$ A=\{e_1, e_2, e_3\},\\ B=\{e_1, e_2\}\tilde{\subseteq}E.$\\
	Let
	$F_A=\{e_1=\{<h_1,[0.7,0.9],[0.3, 0.8]>, <h_2, [0.4, 0.6],[0.5,0.7],[0.4,0.8]>\}\\  e_2=\{<h_1,[0.6, 0.9], [0.7,1.0]>, <h_2, [0.3,0.6],[0.8,1.0]>\}\\
	e_3=\{<h_1,[0.3, 0.6],[0.5,0.6] >, <h_2,[0.3,0.9], [0.1,0.6]>\}\}.$\\
		$G_B=\{e_1=\{<h_1,[0.3, 0.8]>, <h_2, [0.3,0.8],[0.3, 0.6]>\}\\  e_2=\{<h_1,[0.2, 0.9],[0.7,1.0] >, <h_2,[0.8,1.0], [0.2,0.6]>\}\}.$\\
	Now rearrange the membership value of	$ F_A$ and $ G_B$  with the help of  Definitions \ref{defn209}, \ref{defn210} and assumptions given by \cite{chen}, we have\\
	$F_A=\{e_1=\{<h_1,[0.3, 0.8],[0.7,0.9]>, <h_2, [0.4, 0.6],[0.4,0.8],[0.5,0.7]>\}\\  e_2=\{<h_1,[0.6, 0.9], [0.7,1.0]>, <h_2, [0.3,0.6],[0.8,1.0]>\}\\
	e_3=\{<h_1,[0.3, 0.6],[0.5,0.6] >, <h_2, [0.1,0.6],[0.3,0.9]>\}\}.$\\
	
	$G_B=\{e_1=\{<h_1,[0.3, 0.8]>, <h_2, [0.3, 0.6],[0.3,0.8]>\}\\  e_2=\{<h_1,[0.2, 0.9],[0.7,1.0] >, <h_2, [0.2,0.6],[0.8,1.0]>\}\}.$\\
	Suppose a collection $ \tau $ of interval-valued hesitant fuzzy soft sets over $ (U,E) $ as 
	$\tau=\{\tilde{\phi}, \tilde{E}, \tilde{F_A}, \tilde{G_B}\}.$\\
	 	Therefore \\
		(i) $ \tilde{\phi} , \tilde{E} \tilde{\in} \tau$\\
	(ii) $\tilde{\phi} \tilde{\cap} \tilde{E}=\tilde{\phi}, \tilde{\phi} \tilde{\cap} F_A=\tilde{\phi},\tilde{\phi} \tilde{\cap} G_B=\tilde{\phi}, \tilde{E} \tilde{\cap} F_A=F_A, \tilde{E} \tilde{\cap} G_B=G_B $ and \\
	$ F_A \tilde{\cap}G_B=\{e_1=\{<h_1,[0.3, 0.8],[0.3, 0.8]>, <h_2, [0.3, 0.6],[0.3,0.8], [0.3,0.8]>\}\\  e_2=\{<h_1,[0.2, 0.9],[0.7,1.0] >, <h_2, [0.2,0.6],[0.8,1.0]>\}\}.$ \\
		Hence	$  F_A \tilde{\cap}G_B=G_B. $\\
		(iii) $\tilde{\phi} \tilde{\cup} \tilde{E}=\tilde{E}, \tilde{\phi} \tilde{\cup} F_A=F_A,\tilde{\phi} \tilde{\cup} G_B=G_B, \tilde{E} \tilde{\cup} F_A=\tilde{E}, \tilde{E} \tilde{\cup} G_B=\tilde{E} $ and \\
	$ F_A \tilde{\cup}G_B=\{e_1=\{<h_1,[0.3, 0.8],[0.7,0.9]>, <h_2, [0.4, 0.6],[0.4,0.8],[0.5,0.7]>\}\\  e_2=\{<h_1,[0.6, 0.9], [0.7,1.0]>, <h_2, [0.3,0.6],[0.8,1.0]>\}\\
	e_3=\{<h_1,[0.3, 0.6],[0.5,0.6] >, <h_2, [0.1,0.6],[0.3,0.9]>\}\}=F_A,$ \\ and
	$ \tilde{\phi} \tilde{\cup} \tilde{E}\tilde{\cup}F_A=\tilde{E}, \tilde{\phi} \tilde{\cup} \tilde{E}\tilde{\cup}G_B=\tilde{E}, \tilde{E}\tilde{\cup}F_A\tilde{\cup} G_B=\tilde{E},\tilde{\phi} \tilde{\cup}\tilde{E}\tilde{\cup}F_A\tilde{\cup} G_B=\tilde{E}  $\\
	Therefore $ \tau $ is a IVHFS topology on $(U,E).$
	
\end{problem}

\begin{defn} 
If $ \tau $ is a IVHFS topology on $ (U,E) ,$ the triple $ (U, E, \tau) $ is said to be a interval-valued hesitant fuzzy soft topological space (IVHFSTS). Also each member of $ \tau $ ia called a interval-valued hesitant fuzzy soft open set in $(U,E, \tau).  $ 
\end{defn}
\begin{problem} 
From example \ref{prob35}, The triple $ (U,E,\tau) $ is a IVHFS topological space  and the interval-valued hesitant fuzzy soft open sets in $ (U,E,\tau) $ are $\tilde{\phi}, \tilde{E}, \tilde{F_A}, \tilde{G_B}.$
\end{problem}
\begin{problem} 
A IVHFSS $F_A$ over $ (U,E) $ is called an interval-valued hesitant fuzzy soft closed set in $ (U,E,\tau) $ if and only if its complement  $F_A^C$ is a  interval-valued hesitant fuzzy soft open set in $ (U,E,\tau). $
\end{problem}
\begin{defn}
Let $(U,E,\tau)$ be a IVHFSTS. Let $F_{A}$ be a IVHFSS over $(U,E).$
The interval-valued hesitant fuzzy soft closure of $F_A$ is defined as the intersection of
all interval-valued hesitant fuzzy soft closed sets(IVHFSCSs) which contained $F_A$ and is denoted by
$cl(F_A)$ or $\bar{F_A}.$ We write
\[\textrm{cl} (F_A)=\tilde{\bigcap}\{G_B: G_B\textrm {~is IVHFSCS
	and~} F_A\tilde{\subseteq}G_B\}.\]
\end{defn}
\begin{problem} 
From example \ref{prob35},  we have \\	$F_A=\{e_1=\{<h_1,[0.3, 0.8],[0.7,0.9]>, <h_2, [0.4, 0.6],[0.4,0.8],[0.5,0.7]>\},\\  e_2=\{<h_1,[0.6, 0.9], [0.7,1.0]>, <h_2, [0.3,0.6],[0.8,1.0]>\},\\
e_3=\{<h_1,[0.3, 0.6],[0.5,0.6] >, <h_2, [0.1,0.6],[0.3,0.9]>\}\}.$\\

$G_B=\{e_1=\{<h_1,[0.3, 0.8]>, <h_2, [0.3, 0.6],[0.3,0.8]>\},\\  e_2=\{<h_1,[0.2, 0.9],[0.7,1.0] >, <h_2, [0.2,0.6],[0.8,1.0]>\}, \\e_3=\{<h_1,[0.0, 0.0],[0.0,0.0] >, <h_2, [0.0,0.0],[0.0,0.0]> \}\}.$\\
Then interval-valued hesitant fuzzy soft closed sets are\\
$F_A^C=\{e_1=\{<h_1,[0.1, 0.3],[0.2,0.7]>, <h_2, [0.3, 0.5],[0.2,0.6],[0.4,0.6]>\},\\  e_2=\{<h_1,[0.0, 0.3], [0.1,0.4]>, <h_2, [0.0,0.2],[0.4,0.7]>\},\\
e_3=\{<h_1,[0.4, 0.5],[0.4,0.7] >, <h_2, [0.1,0.7],[0.4,0.9]>\}\}.$\\

$G_B^C=\{e_1=\{<h_1,[0.2, 0.7]>, <h_2, [0.2, 0.7],[0.4,0.7]>\},\\  e_2=\{<h_1,[0.0, 0.3],[0.1,0.8] >, <h_2, [0.0,0.2],[0.4,0.8]>\},\\ e_3=\{<h_1,[1.0, 1.0],[1.0,1.0] >, <h_2, [1.0,1.0],[1.0,1.0]> \}\}.$\\
Suppose interval-valued hesitant fuzzy soft set $ I_C $ over $ (U,E) $ as\\
$I_C=\{e_1=\{<h_1,[0.1, 0.2],[0.1,0.7]>, <h_2, [0.3, 0.4],[0.1,0.6],[0.4,0.5]>\}\\  e_2=\{<h_1,[0.0, 0.2], [0.1,0.7]>, <h_2, [0.0,0.1],[0.4,0.8]>\}\\
e_3=\{<h_1,[0.0, 0.0],[0.0,0.0] >, <h_2, [0.0,0.0],[0.0,0.0]>\}\}.$\\

 Then\\
$cl(I_C)=\tilde{E} \tilde{\cap} G_B^C=G_B^C
=\{e_1=\{<h_1,[0.2, 0.7]>, <h_2, [0.2, 0.7],[0.4,0.7]>\},\\  e_2=\{<h_1,[0.0, 0.3],[0.1,0.8] >, <h_2, [0.0,0.2],[0.4,0.8]>\}, \\e_3=\{<h_1,[1.0, 1.0],[1.0,1.0] >, <h_2, [1.0,1.0],[1.0,1.0]> \}\}.$
\end{problem}
\begin{prop}
Let $ (U,E,\tau)$ be a IVHFSTS and $ F_A, G_B $ be two IVHFSs over $ (U,E).$ Then the following are true:
\begin{enumerate}
\item[(i)] $ \textrm{cl}(\tilde{\phi})=\tilde{\phi}, \textrm{cl}(\tilde{E})=\tilde{E} .$
\item[(ii)]$ F_A\tilde{\subseteq}\textrm{cl}(F_A) $
\item[(iii)] $ F_A $ is an interval-valued hesitant fuzzy soft closed set iff $ F_A= \textrm{cl}(F_A).$
\item[(iv)]$ F_A\tilde{\subseteq}G_B\Rightarrow\textrm{cl}(F_A)\tilde{\subseteq}\textrm{cl}(G_B)   $
\item[(v)]$\textrm{cl}(F_A\tilde{\cup}G_B)=\textrm{cl}(F_A)\tilde{\cup}\textrm{cl}(G_B)$
\item[(vi)]$\textrm{cl}(F_A\tilde{\cap}G_B)\tilde{\subseteq}\textrm{cl}(F_A)\tilde{\cap}\textrm{cl}(G_B)$	
\item[(vii)] $\textrm{cl}(\textrm{cl} F_A)= \textrm{cl}(F_A).$	
\end{enumerate}
\end{prop}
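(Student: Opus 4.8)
The plan is to derive all seven items from two structural facts about the family of interval-valued hesitant fuzzy soft closed sets (IVHFSCSs): (a) an arbitrary intersection of IVHFSCSs is an IVHFSCS, and (b) a finite union of IVHFSCSs is an IVHFSCS. Both follow by passing to complements in the axioms defining an IVHFS topology together with the De Morgan laws for interval-valued hesitant fuzzy soft sets (the set-level analogue of the De Morgan laws for IVHFEs cited from \cite{chen}): fact (a) uses axiom (iii) of the topology, fact (b) uses axiom (ii). Granting (a), the set $\textrm{cl}(F_A)=\tilde{\bigcap}\{G_B:G_B\text{ is an IVHFSCS and }F_A\tilde{\subseteq}G_B\}$ is itself an IVHFSCS, and it is by construction the $\tilde{\subseteq}$-smallest IVHFSCS containing $F_A$; this single observation carries most of the proof.

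Items (i)--(iii) then follow immediately. For (i): since $\tilde{\phi}^C=\tilde{E}$ and $\tilde{E}^C=\tilde{\phi}$, both $\tilde{\phi}$ and $\tilde{E}$ are IVHFSCSs by axiom (i) of the topology; as $\tilde{\phi}$ is already a closed set containing $\tilde{\phi}$ we get $\textrm{cl}(\tilde{\phi})=\tilde{\phi}$, and as $\tilde{E}$ is (up to the equality of IVHFSSs) the only IVHFSS containing $\tilde{E}$ we get $\textrm{cl}(\tilde{E})=\tilde{E}$. For (ii): each $G_B$ in the defining family satisfies $F_A\tilde{\subseteq}G_B$, so $F_A$ is contained in their intersection, i.e. $F_A\tilde{\subseteq}\textrm{cl}(F_A)$. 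For (iii): if $F_A$ is an IVHFSCS it belongs to the defining family and lies inside every other member, so $\textrm{cl}(F_A)=F_A$ (using the definition of IVHFS equality, i.e. mutual inclusion); conversely, if $F_A=\textrm{cl}(F_A)$ then $F_A$ is an IVHFSCS by fact (a).

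For (iv), if $F_A\tilde{\subseteq}G_B$ then every IVHFSCS containing $G_B$ also contains $F_A$, so the family defining $\textrm{cl}(G_B)$ is a subfamily of the one defining $\textrm{cl}(F_A)$; a smaller family has a larger intersection, hence $\textrm{cl}(F_A)\tilde{\subseteq}\textrm{cl}(G_B)$. Item (vi) is then immediate from (iv) applied to $F_A\tilde{\cap}G_B\tilde{\subseteq}F_A$ and $F_A\tilde{\cap}G_B\tilde{\subseteq}G_B$, and item (vii) is immediate from (iii) together with fact (a), since $\textrm{cl}(F_A)$ is itself an IVHFSCS. For (v), the inclusion $\textrm{cl}(F_A)\tilde{\cup}\textrm{cl}(G_B)\tilde{\subseteq}\textrm{cl}(F_A\tilde{\cup}G_B)$ follows from (iv) applied to $F_A\tilde{\subseteq}F_A\tilde{\cup}G_B$ and $G_B\tilde{\subseteq}F_A\tilde{\cup}G_B$; for the reverse, fact (b) gives that $\textrm{cl}(F_A)\tilde{\cup}\textrm{cl}(G_B)$ is an IVHFSCS, and by (ii) it contains $F_A\tilde{\cup}G_B$, hence it contains the smallest such closed set $\textrm{cl}(F_A\tilde{\cup}G_B)$; equality then follows by mutual inclusion.

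The step I expect to be the main obstacle is establishing facts (a) and (b) rigorously at the level of IVHFSSs: the cited source records De Morgan only for two IVHFEs, so one must check that complementation interacts correctly with the reordering by the $k$th largest interval number (Definitions \ref{defn209} and \ref{defn210}) and with the subset relation of \cite{zhang}, and that the union and intersection of IVHFSSs from \cite{borah} extend to arbitrary (respectively finite) families with the expected De Morgan behaviour. Once (a) and (b) are secured, the remainder is the standard closure-operator bookkeeping described above.
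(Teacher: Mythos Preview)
Your proposal is correct and follows essentially the same route as the paper: both arguments treat $\textrm{cl}(F_A)$ as the smallest IVHFSCS containing $F_A$ and then derive (i)--(vii) by the standard closure-operator bookkeeping (monotonicity from the definition, equality in (v) via the two inclusions, etc.). The only difference is one of explicitness: you isolate and flag the two structural lemmas (arbitrary intersections and finite unions of IVHFSCSs are IVHFSCS, via De Morgan) as the real content, whereas the paper invokes them tacitly---e.g.\ it simply asserts that $\textrm{cl}(F_A)$ is closed in (iii), (iv), (vii) and that $\textrm{cl}(F_A)\tilde{\cup}\textrm{cl}(G_B)$ is a closed overset of $F_A\tilde{\cup}G_B$ in step (A2) of (v)---so your write-up is, if anything, a cleaner version of the same proof.
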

\begin{proof}
\begin{enumerate}
\item[(i)] Obvious. 
\item[(ii)] The proof directly follows from definition.
\item[(iii)] Let $ (U,E,\tau) $ be a IVHFSTS. Let $ F_A $ be a IVHFSS over $ (U,E) $ such that $ \textrm{cl}(F_A) =F_A. $\\
Therefore from definition of interval-valued hesitant fuzzy soft closure, we have $\textrm{cl}(F_A)  $ is interval-valued hesitant fuzzy soft closed sets. Hence $\textrm{cl}(F_A)  $ is interval-valued hesitant fuzzy soft closed and $ \textrm{cl}(F_A) =F_A. $ i.e $ F_A $ is  interval-valued hesitant fuzzy soft closed.\\
Conversely, let $ F_A $ be interval-valued hesitant fuzzy soft closed in $ (U,E,\tau). $ Therefore from definition of interval-valued hesitant fuzzy soft closure that any interval-valued hesitant fuzzy soft closed set $ G_B,  F_A\tilde{\subseteq}G_B\Rightarrow\textrm{cl}(F_A)\tilde{\subseteq}G_B . $\\
Since $ F_A\tilde{\subseteq}F_A\Rightarrow\textrm{cl}(F_A)\tilde{\subseteq}F_A  $ and from definition $F_A\tilde{\subseteq} \textrm{cl}(F_A) $\\
Hence it follows that $ F_A= \textrm{cl}(F_A).$
\item[(iv)] Let $ F_A\tilde{\subseteq}G_B. $  Since $ G_B\tilde{\subseteq}\textrm{cl}(G_B) .$ Therefore  $ F_A\tilde{\subseteq}\textrm{cl}(G_B) .$\\
Again $\textrm{cl}(F_A)  $ is the smallest interval-valued hesitant fuzzy soft closed set containing $ F_A. $\\
Hence $\textrm{cl}(F_A)\tilde{\subseteq}\textrm{cl}(G_B).$

\item[(v)] From definition of union of IVHFSSs\\
$ F_A\tilde{\subseteq}F_A\tilde{\cup}G_B, G_B\tilde{\subseteq}F_A\tilde{\cup}G_B. $\\
Therefore $ \textrm{cl}(F_A)\tilde{\subseteq}\textrm{cl}(F_A\tilde{\cup}G_B), \textrm{cl}(G_B)\tilde{\subseteq}\textrm{cl}(F_A\tilde{\cup}G_B)\\
\Rightarrow\textrm{cl}(F_A)\tilde{\cup}\textrm{cl}(G_B)\tilde{\subseteq}\textrm{cl}(F_A\tilde{\cup}G_B) $.......................................(A1)\\
Again $ \textrm{cl}(F_A\tilde{\cup}G_B)\tilde{\subseteq}\textrm{cl}(F_A)\tilde{\cup}\textrm{cl}(G_B) $.................................(A2)\\
Since $\textrm{cl}(F_A\tilde{\cup}G_B)  $ is the smallest interval-valued hesitant fuzzy soft closed set containing $F_A\tilde{\cup}G_B.  $
Hence from (A1) and (A2) , \\
$\textrm{cl}(F_A\tilde{\cup}G_B)=\textrm{cl}(F_A)\tilde{\cup}\textrm{cl}(G_B).$
\item[(vi)]From definition of intersection of IVHFSSs\\
$F_A\tilde{\cap}G_B\tilde{\subseteq} F_A, F_A\tilde{\cap}G_B\tilde{\subseteq}G_B.   $\\
Therefore\\ $\textrm{cl}(F_A\tilde{\cap}G_B)\tilde{\subseteq}\textrm{cl}(F_A), \textrm{cl}(F_A\tilde{\cap}G_B)\tilde{\subseteq}\textrm{cl}(G_B)\\
\Rightarrow \textrm{cl}(F_A\tilde{\cap}G_B)\tilde{\subseteq}\textrm{cl}(F_A)\tilde{\cap}\textrm{cl}(G_B). $
\item[(vii)] If $ F_A $ is a interval-valued hesitant fuzzy soft closed set then $  \textrm{cl}(F_A)=F_A .$ Hence 
$\textrm{cl}(\textrm{cl} F_A)= \textrm{cl}(F_A).$
\end{enumerate}
\end{proof}

\begin{defn} Let $(U,E,\tau)$ be a IVHFSTS. Let $F_{A}$ be a IVHFSS over $(U,E).$
	The interval-valued hesitant fuzzy soft  interior of $F_A$ is defined as the union of all
interval-valued hesitant fuzzy soft  open sets (IVHFSOSs) which contained $F_A$ and is denoted by
	$int(F_A)$ or $F^{\circ}_A.$ We write
	\[
	\textrm{int}(F_A)=\tilde{\cup}\{G_B: G_B \textrm {~is IVHFSOS and }G_B\tilde{\subseteq}F_A\}.\]
\end{defn}

\begin{problem} 
From example 3.5, we consider a interval-valued hesitant fuzzy soft set $ I_C $ over $ (U,E) $ as\\
$I_C=\{e_1=\{<h_1,[0.3, 0.8]>, <h_2, [0.3, 0.7],[0.3,0.8]>\}\\  e_2=\{<h_1,[0.2, 1.0], [0.7,1.0]>, <h_2, [0.2,0.7],[0.8,1.0]>\}\\
e_3=\{<h_1,[0.0, 0.0],[0.0,0.0] >, <h_2, [0.0,0.0],[0.0,0.0]>\}\}.$ \\
Therefore \\
$\textrm{int}(I_C)=G_B\tilde{\cup}\tilde{\phi}=G_B
= \{e_1=\{<h_1,[0.3, 0.8]>, <h_2, [0.3, 0.6],[0.3,0.8]>\},\\  e_2=\{<h_1,[0.2, 0.9],[0.7,1.0] >, <h_2, [0.2,0.6],[0.8,1.0]>,\\ e_3=\{<h_1,[0.0, 0.0],[0.0,0.0] >, <h_2, [0.0,0.0],[0.0,0.0]> \}\}.$			
\end{problem}
\begin{prop}
	Let $ (U,E,\tau)$ be a IVHFSTS and $ F_A, G_B $ be two IVHFSs over $ (U,E) .$ Then the following are true:
	\begin{enumerate}
		\item[(i)] $ \textrm{int}(\tilde{\phi})=\tilde{\phi}, \textrm{int}(\tilde{E})=\tilde{E} .$
		\item[(ii)]$\textrm{int}(F_A)\tilde{\subseteq} F_A $
		\item[(iii)] $ F_A $ is an interval-valued hesitant fuzzy soft open set iff $ F_A= \textrm{int}(F_A).$
		\item[(iv)]$ F_A\tilde{\subseteq}G_B\Rightarrow\textrm{int}(F_A)\tilde{\subseteq}\textrm{int}(G_B)   $
		\item[(v)]$\textrm{int}(F_A)\tilde{\cup}\textrm{int}(G_B)\tilde{\subseteq}\textrm{int}(F_A\tilde{\cup}G_B)$
		\item[(vi)]$\textrm{int}(F_A\tilde{\cap}G_B)=\textrm{int}(F_A)\tilde{\cap}\textrm{int}(G_B)$	
		\item[(vii)] $\textrm{int}(\textrm{int} F_A)= \textrm{int}(F_A).$	
	\end{enumerate}
\end{prop}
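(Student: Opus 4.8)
The plan is to derive each of the seven items directly from the definition of $\textrm{int}(F_A)$ as the $\tilde{\cup}$ of all IVHFSOSs contained in $F_A$, using only the three topology axioms for $\tau$ together with the order-theoretic properties of $\tilde{\subseteq}$ (reflexivity, transitivity, and antisymmetry in the sense of the IVHFS-equality defined above). The whole argument is dual to the preceding proposition on $\textrm{cl}$, with ``smallest closed set containing'' replaced by ``largest open set contained in.''

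For (i), $\tilde{\phi}$ and $\tilde{E}$ are themselves IVHFSOSs by topology axiom (i), and since each is respectively the smallest and the largest IVHFSS, each is the union witnessing its own interior. Item (ii) is immediate, because every IVHFSS in the family defining $\textrm{int}(F_A)$ is $\tilde{\subseteq}F_A$, hence so is their union. For (iii): if $F_A$ is open then $F_A$ is one of the sets in the family, so $F_A\tilde{\subseteq}\textrm{int}(F_A)$, and with (ii) plus antisymmetry this gives $F_A=\textrm{int}(F_A)$; conversely, if $F_A=\textrm{int}(F_A)$ then $F_A$ is a $\tilde{\cup}$ of members of $\tau$, hence in $\tau$ by axiom (iii). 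Item (iv) follows since any IVHFSOS $\tilde{\subseteq}F_A$ is also $\tilde{\subseteq}G_B$ when $F_A\tilde{\subseteq}G_B$, so the family defining $\textrm{int}(F_A)$ sits inside the one defining $\textrm{int}(G_B)$ and taking unions preserves the inclusion. Then (v) is just (iv) applied to $F_A\tilde{\subseteq}F_A\tilde{\cup}G_B$ and $G_B\tilde{\subseteq}F_A\tilde{\cup}G_B$, and (vii) is (iii) applied to the open set $\textrm{int}(F_A)$.

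The only step needing genuine care is (vi). The inclusion $\textrm{int}(F_A\tilde{\cap}G_B)\tilde{\subseteq}\textrm{int}(F_A)\tilde{\cap}\textrm{int}(G_B)$ comes from (iv), since $F_A\tilde{\cap}G_B$ is contained in each of $F_A$ and $G_B$. For the reverse inclusion I would invoke topology axiom (ii): $\textrm{int}(F_A)$ and $\textrm{int}(G_B)$ are IVHFSOSs, hence so is $\textrm{int}(F_A)\tilde{\cap}\textrm{int}(G_B)$; by (ii) this set is $\tilde{\subseteq}F_A$ and $\tilde{\subseteq}G_B$, hence $\tilde{\subseteq}F_A\tilde{\cap}G_B$, so it belongs to the family whose union is $\textrm{int}(F_A\tilde{\cap}G_B)$ and therefore $\tilde{\subseteq}\textrm{int}(F_A\tilde{\cap}G_B)$; antisymmetry then yields equality. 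The main obstacle is really the bookkeeping underlying all seven parts: one must be confident that $\tilde{\subseteq}$ is a bona fide partial order on IVHFSSs modulo the equality of the earlier counterexample and that $\tilde{\cup},\tilde{\cap}$ are the corresponding join and meet — once that is pinned down, each item above is a one-line consequence of the topology axioms.
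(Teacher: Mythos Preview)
Your proposal is correct and follows essentially the same route as the paper: each item is obtained directly from the definition of $\textrm{int}$ together with the topology axioms, and the structure of (i)--(v) and (vii) matches the paper's proof almost line for line. In fact your treatment of (vi) is slightly more complete than the paper's: the paper records only $\textrm{int}(F_A)\tilde{\cap}\textrm{int}(G_B)\tilde{\subseteq}F_A\tilde{\cap}G_B$ and then asserts equality, whereas you make explicit the missing step---that $\textrm{int}(F_A)\tilde{\cap}\textrm{int}(G_B)$ is open by topology axiom (ii), hence is one of the sets whose union forms $\textrm{int}(F_A\tilde{\cap}G_B)$---which is exactly what is needed to pass from that inclusion to the reverse of (B1).
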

\begin{proof}
\begin{enumerate}
\item[(i)]	Obvious.
\item[(ii)] The proof directly follows from definition.
\item[(iii)] Let $ (U,E,\tau) $ be a IVHFSTS. Let $ F_A $ be a IVHFSS over $ (U,E) $ such that $ \textrm{int}(F_A) =F_A. $\\
Therefore from definition of interval-valued hesitant fuzzy soft interior, we have $\textrm{int}(F_A)  $ is interval-valued hesitant fuzzy soft open sets. Hence $\textrm{int}(F_A)  $ is interval-valued hesitant fuzzy soft open and $ \textrm{int}(F_A) =F_A. $ i.e $ F_A $ is  interval-valued hesitant fuzzy soft open.\\
Conversely, let $ F_A $ be an interval-valued hesitant fuzzy soft open in $ (U,E,\tau). $ Therefore from definition of interval-valued hesitant fuzzy soft interior that any interval-valued hesitant fuzzy soft open set $ G_B\tilde{\subseteq}F_A\Rightarrow G_B\tilde{\subseteq}\textrm{int}(F_A) . $\\
Since $ F_A\tilde{\subseteq}F_A\Rightarrow F_A\tilde{\subseteq}\textrm{int}(F_A)  $ and from definition $\textrm{int}(F_A)\tilde{\subseteq} F_A $\\
Hence, it follows that $ F_A= \textrm{int}(F_A).$
\item[(iv)] Let $ F_A\tilde{\subseteq}G_B. $ Since $\textrm{int}(F_A)\tilde{\subseteq}F_A \tilde{\subseteq}G_B, \textrm{int}(F_A)$ be a interval valued hesitant fuzzy soft open subset of $ G_B.$ Hence from definition of interval valued 
hesitant fuzzy soft interior, we have $ F_A\tilde{\subseteq}G_B\Rightarrow\textrm{int}(F_A)\tilde{\subseteq}\textrm{int}(G_B) .$
\item[(v)] Since $ F_A\tilde{\subseteq}F_A\tilde{\cup}G_B $ and 
 $ G_B\tilde{\subseteq}F_A\tilde{\cup}G_B .$ Therefore we have \\
 $ \textrm{int}(F_A)\tilde{\subseteq}\textrm{int}(F_A\tilde{\cup}G_B) $ and  $ \textrm{int}(G_B)\tilde{\subseteq}\textrm{int}(F_A\tilde{\cup}G_B).$ \\
 Hence
 $\textrm{int}(F_A)\tilde{\cup}\textrm{int}(G_B)\tilde{\subseteq}\textrm{int}(F_A\tilde{\cup}G_B).$
\item[(vi)] Since $F_A\tilde{\cap}G_B\tilde{\subseteq}F_A  $ and $ F_A\tilde{\cap}G_B\tilde{\subseteq}G_B. $ These implies that $ \textrm{int}(F_A\tilde{\cap}G_B)\tilde{\subseteq}\textrm{int}(F_A) $ and $ \textrm{int}(F_A\tilde{\cap}G_B)\tilde{\subseteq}\textrm{int}(G_B) . $\\
Therefore $\textrm{int}(F_A\tilde{\cap}G_B)\tilde{\subseteq} \textrm{int}(F_A)\tilde{\cap}\textrm{int}(G_B) . $......................(B1)\\
Again we know that $ \textrm{int}(F_A)\tilde{\subseteq}F_A $  and $ \textrm{int}(G_B)\tilde{\subseteq}G_B.  $ \\
Therefore $\textrm{int}(F_A)\tilde{\cap} \textrm{int}(G_B)\tilde{\subseteq}F_A\tilde{\cap}G_B..............................(B2) $\\
Hence from (B1) and (B2) we get \\ $\textrm{int}(F_A\tilde{\cap}G_B)=\textrm{int}(F_A)\tilde{\cap}\textrm{int}(G_B).$
\item[(vii)] From (iii), if $ F_A $ is an interval-valued hesitant fuzzy soft open set then $ F_A= \textrm{int}(F_A).$ Therefore 
$\textrm{int}(\textrm{int} F_A)= \textrm{int}(F_A).$	
\end{enumerate}
\end{proof}
\begin{prop}
If $ \{\tau_\lambda: \lambda\tilde{\in}I\} $ is a family of IVHFSTS on $ (U,E) ,$ then $\bigcap_\lambda \{\tau_\lambda: \lambda\tilde{\in}I\} $ is also a IVHFST on $ (U,E). $
\end{prop}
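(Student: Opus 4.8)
The plan is to write $\tau := \bigcap_{\lambda\in I}\tau_\lambda$, where membership $F_A\tilde{\in}\tau$ means precisely that $F_A\tilde{\in}\tau_\lambda$ for every $\lambda\tilde{\in}I$, and then to verify the three axioms of an interval-valued hesitant fuzzy soft topology one at a time, each time pushing the relevant operation into every individual $\tau_\lambda$ and invoking the corresponding axiom there.

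For axiom (i): since each $\tau_\lambda$ is an IVHFST on $(U,E)$, we have $\tilde{\phi},\tilde{E}\tilde{\in}\tau_\lambda$ for all $\lambda\tilde{\in}I$, hence $\tilde{\phi},\tilde{E}$ belong to the intersection, i.e. $\tilde{\phi},\tilde{E}\tilde{\in}\tau$. For axiom (ii): if $F_A,G_B\tilde{\in}\tau$, then $F_A,G_B\tilde{\in}\tau_\lambda$ for every $\lambda$, so by property (ii) of each $\tau_\lambda$ we get $F_A\tilde{\cap}G_B\tilde{\in}\tau_\lambda$ for every $\lambda$, whence $F_A\tilde{\cap}G_B\tilde{\in}\tau$. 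For axiom (iii): if $\{F_{A_\alpha}:\alpha\tilde{\in}\Delta\}$ is an arbitrary subfamily of $\tau$, then for each fixed $\lambda$ we have $F_{A_\alpha}\tilde{\in}\tau_\lambda$ for all $\alpha\tilde{\in}\Delta$, so property (iii) of $\tau_\lambda$ gives $\bigcup_{\alpha\in\Delta}F_{A_\alpha}\tilde{\in}\tau_\lambda$; as this holds for every $\lambda$, we conclude $\bigcup_{\alpha\in\Delta}F_{A_\alpha}\tilde{\in}\tau$. Hence $\tau$ satisfies (i)--(iii) and is an IVHFST on $(U,E)$.

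There is essentially no obstacle here: the argument is the standard ``intersection of topologies is a topology'' proof transported to the IVHFSS setting. The only point deserving a word of care is the order of quantifiers -- the statement ``for every $\lambda$ the operation remains in $\tau_\lambda$'' is literally the definition of ``lies in $\bigcap_\lambda\tau_\lambda$'', so no finiteness or other restriction on the index set $I$ is required. It may also be worth remarking (as a contrast, not as part of the proof) that the union of a family of IVHFSTs need not be an IVHFST, which is why only the intersection statement is asserted.
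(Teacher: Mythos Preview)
Your proof is correct and follows essentially the same approach as the paper: set $\tau=\bigcap_{\lambda\in I}\tau_\lambda$ and verify axioms (i)--(iii) by noting that membership in $\tau$ means membership in every $\tau_\lambda$, then applying the corresponding axiom inside each $\tau_\lambda$. Your additional remarks on quantifier order and on unions not being IVHFSTs are fine commentary but are not present in the paper's proof.
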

\begin{proof}
Suppose $ \{\tau_\lambda: \lambda\tilde{\in}I\} $  be a IVHFSTS. Therefore $ \tilde{\phi}, \tilde{E} \tilde{\in}\bigcap_{\lambda\tilde{\in}I} \{\tau_\lambda \}. $ If $ F_A ,G_B \tilde{\in}\bigcap_{\lambda\tilde{\in}I} \{\tau_\lambda \}$ then $ F_A, G_B \tilde{\in}\tau_\lambda, \forall \lambda \tilde{\in}I. $\\
Therefore $ F_A\tilde{\cap}G_B \tilde{\in}\tau_\lambda, \forall \lambda \tilde{\in}I. $\\
Thus $F_A\tilde{\cap}G_B \tilde{\in} \bigcap_{\lambda\tilde{\in}I} \{\tau_\lambda \}. $\\
Let $\{F_\alpha\}_{\alpha\tilde{\in}J}\tilde{\subseteq}\bigcap_{\lambda\tilde{\in}I} \{\tau_\lambda \}.$\\
Therefore $ F_\alpha\tilde{\subseteq}\bigcap_{\lambda\tilde{\in}I} \{\tau_\lambda \} ,\alpha\tilde{\in}J.$ This implies $F_\alpha\tilde{\subseteq}\tau_\lambda, \forall \lambda \tilde{\in}I, \alpha\tilde{\in}J.  $\\
Therefore $ \bigcup_{\alpha\tilde{\in}J}F_\alpha\tilde{\in}\bigcap_{\lambda\tilde{\in}I} \{\tau_\lambda \} $
\end{proof}
\begin{defn}
Let $ \tau_1 $ and $ \tau_2 $ be IVHFSTS on $ (U,E) .$ We say that $ \tau_1 $ is coarser (or weaker) than $ \tau_2 $ or $ \tau_2 $ is finer (or stronger) than $ \tau_1 $ if and only if $ \tau_1 \tilde{\subseteq}\tau_2 $ i.e. every $ \tau_1 $ interval valued hesitant fuzzy soft open set (IVHFSOS) is $ \tau_2 $ IVHFSOS. If either $ \tau_1 \tilde{\subseteq}\tau_2 $ or $ \tau_2 \tilde{\subseteq}\tau_1, $ we say that the IVHFST $ \tau_1 $ and  $ \tau_2 $ are comparable. 
If $\tau_1 \tilde{\nsubseteq}\tau_2   $ and $\tau_2 \tilde{\nsubseteq}\tau_1, $ we say the IVHFST $ \tau_1 $ and  
 $ \tau_2 $ are not comparable.
\end{defn}
\begin{problem}
From example 3.5, we consider IVHFST $ \tau_1 $ and $ \tau_2 $ on $ (U,E) $ as \\
$\tau_1=\{\tilde{\phi}, \tilde{E}, \tilde{F_A}\}.  $\\
$\tau_2=\{\tilde{\phi}, \tilde{E}, \tilde{F_A}, \tilde{G_B}\}.  $\\
Therefore $ \tau_1 \tilde{\subseteq}\tau_2 $ and hence $ \tau_1 $ is coarser than $ \tau_2. $
\end{problem}
\begin{defn}
The IVHFSS $ F_A $ over $ (U,E) $ is called a interval valued hesitant fuzzy soft point (IVHFSP) in $ (U,E) $ is denoted by $ e(F_A), $ if for the element $e\tilde{\in}A, \mu_{F(e)}\neq [0,0] $ and $ \mu_{F(e^{'})}= [0,0], \forall e^{'}\tilde{\in}A-{e}.$
\end{defn}
\begin{problem}\label{exa319}

Let $ U=\{h_1, h_2\} ,$ $ E=\{e_1, e_2, e_3\} $ and $ A=\{e_1, e_2\} \tilde{\subseteq}E. $ Suppose a IVHFSS $ F_A $ over $ (U,E) $ as\\
$F_A=\{e_1=\{<h_1,[0.0, 0.0]>, <h_2, [0.0, 0.0],[0.0,0.0]>\}\\  e_2=\{<h_1,[0.3, 0.5],[0.4,0.6] >, <h_2, [0.0,0.6]>\}\}.$\\
Here $ e_2\tilde{\in}A, \mu_{F(e_2)}\neq [0,0]  $ and for $ e^{'}\tilde{\in}A-{e_2}, \mu_{F(e^{'})}= [0,0]. $\\
Thus $ F_A $ is a IVHFSP in $ (U,E) $ denoted by $e_2(F_A).  $
\end{problem}
\begin{defn}
The IVHFSP $ e(F_A) $ is said to be in the IVHFSS $ G_B $ if $ A\tilde{\subseteq}B $ and for the element $ e\tilde{\in}A,\mu_{F(e)}\tilde{\subseteq}\mu_{G(e)}.  $ We denoted as $ e(F_A)\tilde{\in} G_B.$
\end{defn}
\begin{problem}\label{exam321}
From example \ref{exa319}, consider the IVHFSP $ e_2(F_A) $ and an IVHFSS $ G_B $ as \\
$G_B=\{e_1=\{<h_1,[0.1,0.9],[0.2, 0.3]>, <h_2, [0.6, 0.9]>\}\\  e_2=\{<h_1,[0.3, 0.6], [0.5,0.6],[0.5,0.8]>, <h_2, [0.3,0.8]>\}\\
e_3=\{<h_1,[0.5, 0.8] >, <h_2, [0.2,0.7],[0.3,0.8],[0.1,0.9]>\}\}.$\\
Here $ e_2\tilde{\in}A $ and $ \mu_{F(e_2)}\tilde{\subseteq}\mu_{G(e_2)}. $\\
Hence $  e_2(F_A) \tilde{\in}G_B.$
\end{problem}
\begin{defn}
A IVHFSS $ I_C $ in a IVHFSTS $ (U,E, \tau) $ is called a interval valued hesitant fuzzy soft neighborhood (IVHFSNBD) of the IVHFSP $ e(F_A)\tilde{\in}(U,E) $ if there is a IVHFSOS $ G_B $ such that $e(F_A)\tilde{\in}G_B\tilde{\subseteq}I_C.  $
\end{defn}
\begin{problem}\label{exa323}
From examples \ref{exa319}, \ref{exam321},  we consider the IVHFST $\tau=\{\tilde{\phi}, \tilde{E}, \tilde{G_B}\}.  $ and IVHFSS $ I_C $ as \\
$I_C=\{e_1=\{<h_1,[0.2,1.0]>, <h_2, [0.6, 0.9],[0.6,1.0]>\}\\  e_2=\{<h_1,[0.3, 0.6], [0.5,0.8]>, <h_2, [0.3,0.9]>\}\\
e_3=\{<h_1,[0.5, 0.8],[0.5,0.9] >, <h_2, [0.3,0.9],[0.3,1.0]>\}\\  e_4=\{<h_1,[0.1, 0.6], [0.7,0.9]>, <h_2, [0.2,0.6]>\}\},$\\ where $ E=\{e_1, e_2, e_3, e_4\},  C=\{e_1, e_2, e_3, e_4\} \tilde{\subseteq}E.$\\
Therefore  $e(F_A)\tilde{\in}G_B\tilde{\subseteq}I_C.  $\\
Hence $ I_C $ is a IVHFSNHD of the IVHFSP $e_2(F_A). $
\end{problem}
\begin{defn}
The family consisting of all neighborhoods of $ e(F_A)\tilde{\in}(U,E) $ neighborhood system of a fuzzy soft point $e(F_A). $ It is denoted by $ N_\tau(e(F_A)).$
\end{defn}
\begin{defn}
A IVHFSS $ I_C $ in a IVHFSTS $ (U,E,\tau) $ is called a IVHFSNBD of the IVHFSS $ H_A $ if there is a IVHFSOS $ G_B $ such that $ H_A \tilde{\subseteq}G_B\tilde{\subseteq}I_C. $
\end{defn}
\begin{problem}
From examples \ref{exam321}, \ref{exa323} and consider the IVHFSS $ H_A $ as \\
$H_A=\{e_1=\{<h_1,[0.1,0.5]>, <h_2, [0.6, 0.7],[0.6,0.8]>\}\\
e_3=\{<h_1,[0.5, 0.6],[0.4,0.6] >, <h_2, [0.2,0.3]>\}\},$\\ where $ A=\{e_1, e_3\} \tilde{\subseteq}E .$
Therefore $ H_A \tilde{\subseteq}G_B\tilde{\subseteq}I_C. $\\
Hence IVHFSS $ I_C $ is IVHFSNBD of the IVHFSS $ H_A. $
\end{problem}
\begin{prop}
The neighborhood system $ N_\tau(e(F_A))$ at, $ \forall e(F_A) $ in an IVHFSTS $ (U,E,\tau) $ has the following properties:
\begin{enumerate}
\item[(i)] If $ G_B \tilde{\in} N_\tau(e(F_A)) $ then $e(F_A)\tilde{\in}G_B.  $
\item[(ii)]If $ G_B \tilde{\in} N_\tau(e(F_A)) $ and $ G_B \tilde{\subseteq}H_C $ then $ H_C  \tilde{\in} N_\tau(e(F_A)).$
\item[(iii)]If $ G_B, H_C \tilde{\in} N_\tau(e(F_A)) $ then $ G_B\tilde{\cap} H_C \tilde{\in} N_\tau(e(F_A)) .$
\item[(iv)] If $ G_B \tilde{\in} N_\tau(e(F_A)) $ then there is a $H_C \tilde{\in} N_\tau(e(F_A))   $ such that  $ G_B \tilde{\in} N_\tau(e'(M_D)) $ for each $ e'(M_D)\tilde{\in}H_C. $
\end{enumerate}
\end{prop}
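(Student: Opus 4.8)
The plan is to reduce every clause to a manipulation of the open set witnessing the neighborhood relation. Recall that $G_B \tilde{\in} N_\tau(e(F_A))$ means, by definition, that there is an IVHFSOS $O$ with $e(F_A)\tilde{\in} O\tilde{\subseteq} G_B$; call such an $O$ a \emph{witness}. Throughout I would use that the point-in-set relation $e(F_A)\tilde{\in}(\cdot)$ is defined through $\tilde{\subseteq}$ on the relevant IVHFEs, so it inherits transitivity from transitivity of $\tilde{\subseteq}$ on IVHFSSs.

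For (i), fix a witness $O$ for $G_B$. From $e(F_A)\tilde{\in} O$ and $O\tilde{\subseteq} G_B$ the transitivity just mentioned gives $e(F_A)\tilde{\in} G_B$ directly. For (ii), the same witness $O$ of $G_B$ also witnesses $H_C$, since $O\tilde{\subseteq} G_B\tilde{\subseteq} H_C$, whence $H_C\tilde{\in} N_\tau(e(F_A))$. For (iv), let $O$ witness $G_B$ and put $H_C=O$; then $H_C$ is open, and $e(F_A)\tilde{\in} H_C\tilde{\subseteq} H_C$ shows $H_C\tilde{\in} N_\tau(e(F_A))$, while for any $e'(M_D)\tilde{\in} H_C$ we have $e'(M_D)\tilde{\in} O\tilde{\subseteq} G_B$ with $O$ open, so $G_B\tilde{\in} N_\tau(e'(M_D))$, as required.

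The only clause that needs genuine work is (iii). Let $O_1$ witness $G_B$ and $O_2$ witness $H_C$. By axiom (ii) of an IVHFS topology, $O_1\tilde{\cap} O_2\tilde{\in}\tau$, i.e. it is an IVHFSOS. Monotonicity of $\tilde{\cap}$ (which follows from the definition of intersection of IVHFSSs together with the definition of $\tilde{\subseteq}$) gives $O_1\tilde{\cap} O_2\tilde{\subseteq} G_B\tilde{\cap} H_C$. It remains to check $e(F_A)\tilde{\in} O_1\tilde{\cap} O_2$: writing $e$ for the distinguished parameter of the IVHFSP, one has $\mu_{F(e)}\tilde{\subseteq}\mu_{O_1(e)}$ and $\mu_{F(e)}\tilde{\subseteq}\mu_{O_2(e)}$, and one must deduce $\mu_{F(e)}\tilde{\subseteq}\mu_{O_1(e)}\tilde{\cap}\mu_{O_2(e)}=\mu_{(O_1\tilde{\cap} O_2)(e)}$. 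This is the point I expect to be the main obstacle: it amounts to the order-theoretic fact that, under the ``$k$th largest interval'' comparison used to define $\tilde{\subseteq}$ on IVHFEs, the IVHFE intersection $\mu_{O_1(e)}\tilde{\cap}\mu_{O_2(e)}$ (formed with coordinatewise $\min$ on the lower and upper endpoints) still dominates every common lower bound $\mu_{F(e)}$ of $\mu_{O_1(e)}$ and $\mu_{O_2(e)}$. I would isolate this as a short preliminary lemma --- essentially that $\tilde{\cap}$ realizes the meet for the order $\tilde{\subseteq}$ on IVHFEs of a fixed length --- and then (iii) follows by combining that lemma with the openness of $O_1\tilde{\cap} O_2$ and the inclusion $O_1\tilde{\cap} O_2\tilde{\subseteq} G_B\tilde{\cap} H_C$.
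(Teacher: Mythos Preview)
Your proposal is correct and follows essentially the same approach as the paper: in each part you choose the open witness(es) and chase inclusions exactly as the paper does, including taking $H_C$ to be the open witness itself in part (iv). The only difference is that in part (iii) you carefully isolate and justify the step $e(F_A)\tilde{\in}O_1\tilde{\cap}O_2$ via a meet-lemma on IVHFEs, whereas the paper simply asserts $e(F_A)\tilde{\in}L_D\tilde{\cap}M_E\tilde{\subseteq}G_B\tilde{\cap}H_C$ without further comment; your version is strictly more scrupulous on this point.
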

\begin{proof}
\begin{enumerate}
\item[(i)] If $ G_B \tilde{\in} N_\tau(e(F_A)), $ then there is a IVHFSOS $ H_C $ such that $ e(F_A)\tilde{\in}H_C\tilde{\subseteq}G_B. $ Therefore we have $e(F_A)\tilde{\in}G_B.  $
\item[(ii)] Let $ G_B \tilde{\in} N_\tau(e(F_A)), $ and $ G_B \tilde{\subseteq}H_C .$ Then there is a $ L_D $ such that $ e(F_A)\tilde{\in}L_D\tilde{\subseteq}G_B $ and $ e(F_A)\tilde{\in}L_D\tilde{\subseteq}G_B\tilde{\subseteq}H_C. $
Therefore $ H_C  \tilde{\in} N_\tau(e(F_A)).$
\item[(iii)] If $ G_B, H_C \tilde{\in} N_\tau(e(F_A)) $ then there exist IVHFSOSs $ L_D, M_E $ such that $ e(F_A)\tilde{\in}L_D\tilde{\subseteq}G_B $ and $ e(F_A)\tilde{\in}M_E\tilde{\subseteq}H_C. $ Thus $ e(F_A)\tilde{\in}L_D \tilde{\cap}M_E\tilde{\subseteq}G_B\tilde{\cap}H_C. $ Since $ L_D \tilde{\cap}M_E \tilde{\in}\tau. $ Hence we have $ G_B\tilde{\cap} H_C \tilde{\in} N_\tau(e(F_A)) .$ 
\item[(iv)] If $ G_B \tilde{\in} N_\tau(e(F_A)), $ then there is an IVHFSOS $ L_P \tilde{\in}\tau $ such that  $ e(F_A)\tilde{\in}L_P\tilde{\subseteq}G_B. $ Now put $ H_C=L_P. $ Then for each  $ e'(M_D)\tilde{\in}H_C,  e'(M_D)\tilde{\in}H_C\tilde{\subseteq}G_B. $ This implies 
 $ G_B \tilde{\in} N_\tau(e'(M_D)) .$ 
\end{enumerate}
\end{proof}

\end{document}